\newtheorem{theorem}{Theorem}[section]
\newtheorem{corollary}{Corollary}[section]
\newtheorem*{theorem*}{Theorem}
\newtheorem*{remark*}{Remark}
\newtheorem*{problem*}{Problem}
\newtheorem*{conjecture*}{Conjecture}
\newtheorem{lemma}[theorem]{Lemma}
\newcommand{\R}{\mathbb{R}}
\title{On the distribution of $\phi(\sigma(n))$}
\author{Anup B. Dixit}
\author{Saunak Bhattacharjee}
\address{Indian Institute of Science Education and Research Tirupati, Srinivasapuram, Venkatagiri Road, Jangalapalli Village, Panguru (G.P), Yerpedu Mandal, Tirupati District, Andhra Pradesh, India 517619.}
\email{saunakbhattacharjee@students.iisertirupati.ac.in}
\address{Institute of Mathematical Sciences (HBNI), CIT Campus, Taramani, Chennai, Tamil Nadu, India 600113.}
\email{anupdixit@imsc.res.in}
\subjclass[2020]{11N37, 11N36, 11N64}
\keywords{Euler totient function, sum of divisors function}
\date{\today}
\begin{document}

\begin{abstract}
    Let $\phi(n)$ be the Euler totient function and $\sigma(n)$ denote the sum of divisors of $n$. In this note, we obtain explicit upper bounds on the number of positive integers $n\leq x$ such that $\phi(\sigma(n)) > cn$ for any $c>0$. This is a refinement of a result of Alaoglu and Erd\H{o}s.
\end{abstract}

\maketitle

\section{\bf{Introduction}}
\bigskip

For any positive integer $n$, let $\phi(n)$ be the Euler-totient function given by
\begin{equation*}
    \phi(n)= n \prod_{p|n}\left(1-\frac{1}{p}\right),
\end{equation*}
where $p$ runs over distinct primes dividing $n$. Let $\sigma(n)$ be the sum of divisors of $n$, which is given by
\begin{equation*}
    \sigma(n) = \sum_{d | n} d = n \prod_{p^k \| n} \left(\frac{1-p^{k+1}}{1-p}\right).
\end{equation*}
Here the notation $p^k \| n$ means that $p^k$ is the largest power of $p$ dividing $n$. In 1944, L. Alaoglu and P. Erd\H{o}s introduced the study of compositions of such arithmetic functions. In particular, they showed that for any real number $c>0$, 
\begin{align*}
    \#\{n\leq x: \phi\left(\sigma\left(n\right)\right)\geq cn \}= o(x) \qquad \text{ and } \qquad \#\{n\leq x: \sigma\left(\phi\left(n\right)\right)\leq cn \}= o(x).
\end{align*}
In \cite{luca}, F. Luca and C. Pomerance obtained finer results on the distribution of $\sigma(\phi(n))$. The objective of this paper is to study the distribution of $\phi(\sigma(n))$.\\

\noindent Denote by $\log_k $ the $k$-fold iterated logarithm $\log \log \cdots \log $ ($k$-times). We show that

\begin{theorem}\label{main-thm}
    For every $c>0$,
    \begin{equation*}
        \# \big\{ n\leq x: \phi\left(\sigma\left(n\right)\right)\geq cn \big\} \leq \frac{\pi^2 x}{6c\log_4 x} + O\left( \frac{x\log_3 x}{\left(\log x\right)^\frac{1}{\log_3 x}\log_4 x}\right),
    \end{equation*}
    where the implied constant only depends on $c$.
\end{theorem}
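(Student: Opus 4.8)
The plan is to bound $\phi(\sigma(n))/\sigma(n)$ from above for almost all $n$ and thereby reduce to the elementary problem of counting $n$ with $\sigma(n)/n$ large. Put $z:=\log_3 x$. Since $\phi(m)/m=\prod_{p\mid m}(1-1/p)$ decreases when new prime factors are added, if $\sigma(n)$ is divisible by \emph{every} prime $q\le z$ then, by Mertens' theorem and the inequality $e^{-\gamma}<1$,
\[
\frac{\phi(\sigma(n))}{\sigma(n)}\ \le\ \prod_{q\le z}\Bigl(1-\frac1q\Bigr)\ =\ \bigl(e^{-\gamma}+o(1)\bigr)\frac{1}{\log z}\ \le\ \frac{1}{\log_4 x}
\]
for all large $x$. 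For such $n$ the hypothesis $\phi(\sigma(n))\ge cn$ then forces $\sigma(n)/n\ge c\log_4 x$, so that
\[
\#\{n\le x:\phi(\sigma(n))\ge cn\}\ \le\ \#\Bigl\{n\le x:\tfrac{\sigma(n)}{n}\ge c\log_4 x\Bigr\}\ +\ \sum_{q\le z}\#\{n\le x: q\nmid\sigma(n)\}.
\]

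For the first term I would use $\sigma(n)/n=\sum_{d\mid n}1/d$, so that $\sum_{n\le x}\sigma(n)/n=\sum_{d\le x}\tfrac1d\lfloor x/d\rfloor=\zeta(2)x+O(\log x)=\tfrac{\pi^2}{6}x+O(\log x)$; Markov's inequality gives
\[
\#\Bigl\{n\le x:\tfrac{\sigma(n)}{n}\ge c\log_4 x\Bigr\}\ \le\ \frac{1}{c\log_4 x}\sum_{n\le x}\frac{\sigma(n)}{n}\ =\ \frac{\pi^2 x}{6c\log_4 x}+O\!\Bigl(\frac{\log x}{\log_4 x}\Bigr),
\]
which is exactly the main term, the error being negligible against the one claimed.

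The crux is a uniform bound for $\#\{n\le x:q\nmid\sigma(n)\}$. If $q\nmid\sigma(n)$, then since $\sigma(p)=p+1$ divides $\sigma(n)$ whenever $p\parallel n$, no prime $p\equiv-1\pmod{q}$ divides $n$ to the first power; equivalently the squarefree part of $n$ is composed only of primes $\not\equiv-1\pmod{q}$. Writing $n=sw$ with $s$ squarefree, $w$ powerful, $(s,w)=1$,
\[
\#\{n\le x:q\nmid\sigma(n)\}\ \le\ \sum_{\substack{w\le x\\ w\ \mathrm{powerful}}}S_q(x/w),\qquad S_q(T):=\#\bigl\{k\le T:\ p\mid k\Rightarrow p\not\equiv-1\pmod{q}\bigr\}.
\]
By Dirichlet's theorem the primes $\equiv-1\pmod{q}$ have density $1/(q-1)$, so a standard estimate for integers free of prime factors in a fixed residue class yields $S_q(T)\ll T/(\log T)^{1/(q-1)}$ with an implied constant that does not depend on $q$; summing over powerful $w$ (whose reciprocals are summable, the range $w>\sqrt x$ contributing only $O(x^{3/4})$) gives $\#\{n\le x:q\nmid\sigma(n)\}\ll x/(\log x)^{1/(q-1)}$ uniformly in $q$. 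Finally, bounding each term by the largest and using $\pi(z)\ll z/\log z$, the sum over the primes $q\le z=\log_3 x$ produces
\[
\sum_{q\le z}\#\{n\le x:q\nmid\sigma(n)\}\ \ll\ \frac{\log_3 x}{\log_4 x}\cdot\frac{x}{(\log x)^{1/\log_3 x}},
\]
which is the asserted error term; combining the three displays completes the proof.

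I expect the genuine difficulty to be the uniformity in $q$ of $S_q(T)\ll T/(\log T)^{1/(q-1)}$. Rankin's trick alone loses a spurious factor $\log T$, which would be fatal here, so one must use the sharper Selberg--Delange (or Wirsing-type) asymptotic and verify that its leading constant stays bounded as $q$ varies; this holds because as $q$ grows the omitted residue class thins out and the relevant Euler product tends to $1$. The remaining ingredients --- Mertens' and Dirichlet's theorems, the divisor identity, and $\sum_{w\ \mathrm{powerful}}1/w<\infty$ --- are routine.
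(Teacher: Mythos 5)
Your proposal has the same skeleton as the paper's proof: reduce via Mertens to the event $\sigma(n)/n\ge c\log_4 x$, handle that event by a first-moment (Markov) bound on $\sum_{n\le x}\sigma(n)/n=\tfrac{\pi^2}{6}x+O(\log^2 x)$, and control the exceptional set by bounding $\#\{n\le x: q\nmid\sigma(n)\}$ for each prime $q\le\log_3 x$ through the observation that $p\parallel n$ with $p\equiv-1\pmod q$ forces $q\mid\sigma(n)$. The main term and the shape of the error term come out identically. The one place you diverge is the crux you yourself flag: you assert $S_q(T)\ll T/(\log T)^{1/(q-1)}$ with an absolute implied constant and propose to get it from Selberg--Delange, checking that the leading constant stays bounded in $q$. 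As written this is an IOU rather than a proof: the basic sieve upper bound gives $O(Te^{-A(T)})$ with $A(T)=\sum_{p\le T,\,p\equiv-1(q)}1/p$, and extracting the full exponent $1/(q-1)$ of $\log T$ requires knowing that the Mertens constant for the progression $-1\bmod q$ is bounded below uniformly in $q$ (equivalently, uniform control of the Selberg--Delange singular constant), which is true in your range $q\le\log_3 x$ but is exactly the non-routine input you would still have to supply.

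The paper sidesteps this entirely, and it is worth noting how cheaply. It applies the Brun-sieve estimate (Lemma \ref{sieve-lemma}, uniform in the set $P$) only to the primes $p\equiv-1\pmod q$ lying in the window $(\log x,\,x)$, where Siegel--Walfisz plus partial summation gives $\sum 1/p=\tfrac{1}{q-1}(\log_2 x-\log_3 x)+O(1/\log_2 x)$ with an absolute constant and no Mertens-constant issue; the integers with $p^2\mid n$ for some such large $p$ are disposed of trivially by $x\sum_{p>\log x}p^{-2}\ll x/\log x$, which also replaces your squarefree-times-powerful decomposition. The cost is a weaker bound $S_q(x)\ll x(\log_2 x/\log x)^{1/(q-1)}$, but since the sum over $q\le\log_3 x$ is dominated by the largest $q$, the extra factor $(\log_2 x)^{1/(q-1)}\le e$ there is harmless and the same final error term results. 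So your route can be completed, but you should either restrict the prime sum to a window as the paper does, or cite a uniform Mertens-type theorem for arithmetic progressions to justify the constant in $S_q(T)\ll T/(\log T)^{1/(q-1)}$; as it stands that step is the only genuine gap.
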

          
This implies that except for $\ll \frac{x}{\log\log\log\log x}$ integers less than $x$, $\phi(\sigma(n)) < cn$ for any $c>0$. It is possible to replace the constant $c$ above by a slowly decaying function. For a non-decreasing real function $f$, define
\begin{equation*}
    P_f(x) := \left\{n\leq x: \phi\left(\sigma\left(n\right)\right)\geq \frac{n}{f(n)} \right\}.
\end{equation*}
Then, we prove that
\begin{theorem}\label{thm-2}
    Suppose $f:\R^+ \to \R^+$ is a non-decreasing function satisfying
    \begin{equation*}
         f(x) = o\left(\log_4 x\right).
    \end{equation*}
    Then, 
    \begin{equation*}
        |P_f(x)| = O\left( \frac{xf(x)}{\log_4 x} +\frac{x\log_3 x}{\left(\log x\right)^\frac{1}{\log_3 x}\log_4 x}\right)= o(x)
    \end{equation*}
     as $x\to \infty$. In other words, for almost all positive integers $n$, $\phi(\sigma(n)) < \frac{n}{f(n)}$.
\end{theorem}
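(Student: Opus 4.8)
The plan is to reduce Theorem~\ref{thm-2} to (the proof of) Theorem~\ref{main-thm}, using only the monotonicity of $f$. First, since $f$ is non-decreasing, every $n\le x$ satisfies $f(n)\le f(x)$, hence $n/f(n)\ge n/f(x)$, and therefore
\begin{equation*}
 P_f(x)\ \subseteq\ \Big\{\, n\le x\ :\ \phi(\sigma(n))\ge \tfrac{n}{f(x)}\,\Big\}.
\end{equation*}
So it suffices to count the right-hand set, which is exactly the quantity estimated in Theorem~\ref{main-thm} with the constant $c$ replaced by $1/f(x)$. The one subtlety is that $c=1/f(x)$ now varies with $x$, so we need the estimate of Theorem~\ref{main-thm} to be uniform in $c$.

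To see the uniformity I would revisit the argument behind Theorem~\ref{main-thm}. Its natural proof fixes a threshold $z=z(x)$ and splits $\{n\le x\}$ according to whether $\sigma(n)$ is divisible by every prime $p\le z$. For $n$ in the ``good'' set one has $\phi(\sigma(n))\le \sigma(n)\prod_{p\le z}(1-1/p)$, so $\phi(\sigma(n))\ge cn$ forces $\sigma(n)/n\ge c\big/\prod_{p\le z}(1-1/p)$; since $\sum_{n\le x}\sigma(n)/n=\sum_{d\le x}\tfrac{1}{d}\lfloor x/d\rfloor\le \tfrac{\pi^2}{6}x$, Markov's inequality bounds this contribution by $\tfrac{\pi^2 x}{6c}\prod_{p\le z}(1-1/p)$, and by Mertens' theorem one may choose $z$ (independently of $c$) so that $\prod_{p\le z}(1-1/p)\le 1/\log_4 x$. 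The complementary ``bad'' set --- those $n\le x$ for which $p\nmid \sigma(n)$ for some $p\le z$ --- is bounded by $\sum_{p\le z}\#\{n\le x: p\nmid\sigma(n)\}$, an estimate in which $c$ plays no role at all, and this is what produces the term $O\big(x\log_3 x/((\log x)^{1/\log_3 x}\log_4 x)\big)$. Hence the bound of Theorem~\ref{main-thm} in fact holds uniformly in $c>0$ with an absolute implied constant.

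Taking $c=1/f(x)$ then yields
\begin{equation*}
 |P_f(x)|\ \le\ \frac{\pi^2\,x\,f(x)}{6\log_4 x}+O\!\left(\frac{x\log_3 x}{(\log x)^{1/\log_3 x}\log_4 x}\right)\ =\ O\!\left(\frac{xf(x)}{\log_4 x}+\frac{x\log_3 x}{(\log x)^{1/\log_3 x}\log_4 x}\right),
\end{equation*}
which is the asserted estimate. For the final ``$o(x)$'': the hypothesis $f(x)=o(\log_4 x)$ makes the first term $o(x)$, and since $(\log x)^{1/\log_3 x}=\exp\!\big(\log_2 x/\log_3 x\big)$ tends to infinity faster than $\log_3 x$ (because $e^t\ge t\log t$), the second term is $o(x)$ as well. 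Therefore $|P_f(x)|=o(x)$, i.e.\ $\phi(\sigma(n))<n/f(n)$ for almost all $n$.

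The only genuinely technical point is the claimed uniformity in $c$ in Theorem~\ref{main-thm}: one must check that the cutoff $z$ and the treatment of the ``bad'' set can be chosen without reference to $c$. This is exactly what happens, since $c$ enters only through the main term, so the rest of the deduction is routine.
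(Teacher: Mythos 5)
Your proposal is correct and follows essentially the same route as the paper: the paper's own proof of Theorem \ref{thm-2} simply re-runs the argument for Theorem \ref{main-thm} with $y=\log_3 x$ and $\delta=\log_4 x/f(x)$, which (after using monotonicity of $f$ to replace $n/f(n)$ by $n/f(x)$) is exactly your observation that the bound of Theorem \ref{main-thm} is uniform in $c$ because $c$ enters only through the Markov-inequality main term. Your explicit justification of that uniformity, and of the inclusion $P_f(x)\subseteq\{n\le x:\phi(\sigma(n))\ge n/f(x)\}$, is sound.
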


Choosing $f(x) = \log_5 x$ in Theorem \ref{thm-2}, we obtain the following corollary, which is an improvement of the result of Alaoglu and Erd\H{o}s \cite{erdos}.

\begin{corollary}
    Except for $O\left(\frac{x \log_5 x}{\log_4 x }\right)$ positive integers $n\leq x$,
    \begin{equation*}
        \phi(\sigma(n)) \leq \frac{n}{\log_5 n}.
    \end{equation*}
\end{corollary}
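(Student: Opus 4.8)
The plan is to deduce the Corollary directly from Theorem~\ref{thm-2} by taking the explicit choice $f(x) = \log_5 x$. First I would check that this $f$ is admissible in the sense of Theorem~\ref{thm-2}: on the range where the fivefold logarithm is defined and positive it is non-decreasing, and, writing $t = \log_4 x \to \infty$, it satisfies $f(x) = \log t = o(t) = o(\log_4 x)$. On the bounded initial segment of $\R^+$ where $\log_5 x \le 0$ is not defined we may simply extend $f$ by a constant; this changes $|P_f(x)|$ by at most $O(1)$ and so does not affect the stated bound. Thus Theorem~\ref{thm-2} applies to this $f$.

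With $f(x)=\log_5 x$, Theorem~\ref{thm-2} yields
\begin{equation*}
    |P_f(x)| = O\!\left( \frac{x\log_5 x}{\log_4 x} + \frac{x\log_3 x}{(\log x)^{1/\log_3 x}\log_4 x}\right).
\end{equation*}
The next step is to see that the second term is absorbed by the first; in fact it is $o(x/\log_4 x)$. Writing $(\log x)^{1/\log_3 x} = \exp\!\big(\log_2 x/\log_3 x\big)$ and using $\log_2 x = \exp(\log_3 x)$, we have $\log_2 x/\log_3 x = \exp(\log_3 x)/\log_3 x$, which grows faster than any fixed power of $\log_3 x$; hence
\begin{equation*}
    \frac{\log_3 x}{(\log x)^{1/\log_3 x}} = \log_3 x \cdot \exp\!\left(-\frac{\log_2 x}{\log_3 x}\right) \longrightarrow 0.
\end{equation*}
Consequently the second error term is $o\!\big(x/\log_4 x\big) = O\!\big(x\log_5 x/\log_4 x\big)$, and therefore $|P_f(x)| = O\!\big(x\log_5 x/\log_4 x\big)$.

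Finally I would unwind the definition of $P_f$. By construction $P_f(x) = \{\,n\le x : \phi(\sigma(n)) \ge n/\log_5 n\,\}$, so for every $n\le x$ lying outside this exceptional set of size $O\!\big(x\log_5 x/\log_4 x\big)$ one has $\phi(\sigma(n)) < n/\log_5 n$, which in particular gives the asserted inequality $\phi(\sigma(n)) \le n/\log_5 n$. I do not expect a genuine obstacle here: the substance is entirely contained in Theorem~\ref{thm-2}, and the only point needing a moment's care is the comparison of the two error terms, i.e.\ recognizing that $(\log x)^{1/\log_3 x}$ dominates any fixed power of $\log_3 x$, so that the choice $f(x)=\log_5 x$ (rather than anything smaller) is what governs the size of the exceptional set.
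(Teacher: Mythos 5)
Your proposal is correct and follows exactly the paper's route: the paper simply states that the corollary follows by choosing $f(x)=\log_5 x$ in Theorem~\ref{thm-2}. The only detail you add beyond the paper is the (correct) verification that the second error term $\frac{x\log_3 x}{(\log x)^{1/\log_3 x}\log_4 x}$ is $o(x/\log_4 x)$ and hence absorbed into $O\left(\frac{x\log_5 x}{\log_4 x}\right)$, a point the paper leaves implicit.
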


\bigskip

\section{\bf Preliminaries}
\bigskip

A necessary component of our proof is to estimate the number of positive integers not greater than $x$, which do not have certain prime factors. Such an estimate requires an application of Brun's sieve. For our purpose, we invoke the following result by P. Pollack and C. Pomerance \cite[Lemma 3]{pollack}.

\begin{lemma}\label{sieve-lemma}
    Let $P$ be a set of primes and for $x>1$, let
    \begin{equation*}
        A(x)=\sum_{ \substack{p\leq x\\  p\in P}} \frac{1}{p}.
    \end{equation*}
    Then uniformly for all choices of $P$, the proportion of  $n\leq x$ free of prime factors from $P$ is $O\left(e^{-A(x)}\right)$.
\end{lemma}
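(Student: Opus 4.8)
Although this lemma is imported from \cite{pollack}, we indicate how one would prove it. Write $S(x;P)$ for the number of $n\le x$ with no prime factor in $P$; as only primes $p\le x$ can divide such an $n$, we may assume every prime of $P$ is at most $x$. The plan is to set $z:=x^{1/10}$ and $P_z:=\{p\in P: p<z\}$: an $n\le x$ that is free of prime factors from $P$ is a fortiori free of prime factors from $P_z$, so $S(x;P)\le S(x;P_z)$, and it suffices to show $S(x;P_z)\ll x\,e^{-A(x)}$ with an absolute implied constant.

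The main step would be a sieve estimate for $S(x;P_z)$. I would apply the fundamental lemma of sieve theory (Brun's sieve) to the sequence $\mathcal A=\{n\le x\}$ sifted by the primes of $P_z$: the sieve density attached to each $p\in P_z$ is $1/p$, so the sifting dimension equals $1$, and $\#\{n\le x: d\mid n\}=\lfloor x/d\rfloor=x/d+r_d$ with $|r_d|<1$. Taking a level of distribution $D=z^{5}=x^{1/2}$, this gives
\begin{equation*}
S(x;P_z)\ \ll\ x\prod_{p\in P_z}\Big(1-\frac1p\Big)\ +\ \sum_{\substack{d<D\\ d\,\mid\,\prod_{p\in P_z}p}}|r_d|\ \ll\ x\prod_{p\in P_z}\Big(1-\frac1p\Big)\ +\ x^{1/2},
\end{equation*}
with implied constants depending only on the dimension $1$ and on $\log D/\log z=5$, hence absolute and uniform in $P$. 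Since $P_z\subseteq\{p: p<z\}$, Mertens' theorem gives $\prod_{p\in P_z}(1-1/p)\ge\prod_{p<z}(1-1/p)\gg 1/\log z\gg 1/\log x$, so the main term dominates the remainder $x^{1/2}$ and $S(x;P_z)\ll x\prod_{p\in P_z}(1-1/p)$.

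It remains to compare the product with $e^{-A(x)}$. Writing $V:=\sum_{p\in P_z}1/p$, we have $V=A(x)-\sum_{p\in P,\ z\le p\le x}1/p\ge A(x)-\sum_{z\le p\le x}1/p$, and $\sum_{z\le p\le x}1/p=\log\log x-\log\log z+O(1/\log z)=\log 10+o(1)$ by Mertens' theorem, since $\log z=\tfrac1{10}\log x$. Hence $V\ge A(x)-\log 10-o(1)$, so by $1-t\le e^{-t}$,
\begin{equation*}
\prod_{p\in P_z}\Big(1-\frac1p\Big)\ \le\ e^{-V}\ \le\ e^{-A(x)+\log 10+o(1)}\ \ll\ e^{-A(x)}.
\end{equation*}
Combining the displays yields $S(x;P)\le S(x;P_z)\ll x\,e^{-A(x)}$, which is the assertion.

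The delicate point is the choice of $z$ as a \emph{fixed} positive power of $x$, which is precisely what makes the comparison of $V$ with $A(x)$ affordable: a much smaller sifting limit such as $z=\log x$ would trivialize the sieve but lose a factor of order $\log x$ in passing from $V$ to $A(x)$. At the level $z=x^{1/10}$ the elementary inclusion-exclusion (Eratosthenes) sieve is useless, since its error term $2^{|P_z|}$ can far exceed $x$ when $P$ is dense, and even Brun's pure, Bonferroni-truncated sieve does not suffice: taming its truncation tail forces a truncation length of order $\log\log x$, after which the number of admissible divisors $d$ dwarfs $x$. It is the combinatorial economy of the full Brun sieve, allowing a level of distribution that is a fixed power of $x$, that makes the estimate go through; the rest is Mertens' theorem.
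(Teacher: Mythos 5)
The paper states this lemma without proof, importing it verbatim from Pollack--Pomerance \cite[Lemma 3]{pollack}, and their argument is essentially the one you give: restrict to the primes of $P$ below a fixed power $z=x^{c}$, apply the fundamental lemma of the sieve with level of distribution a fixed power of $x$, and absorb the discarded tail $\sum_{z\le p\le x}1/p=O(1)$ into the implied constant via Mertens. Your write-up is correct and complete, including the key points of uniformity in $P$ and the necessity of taking $z$ to be a fixed power of $x$.
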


We also recall the famous Siegel-Walfisz theorem (see \cite[Corollary 11.21]{Montgomery}).

\begin{lemma}[Siegel-Walfisz]\label{siegel}
    For $(a,q) = 1$, let  $\pi(x;q, a) $ denote the number of primes $p\leq x$ such that $p\equiv a (\bmod \, q)$. Let $A>0$ be given. If $q \leq (\log x)^A$, then
    \begin{equation*}
        \pi(x; q,a) = \frac{li(x)}{\phi(q)} + O\left(x \exp(-c \sqrt{\log x})\right),
    \end{equation*}
    where the implied constant only depends on $A$ and $li(x) := \int_2^{x} \frac{1}{\log t} \, dt$.
\end{lemma}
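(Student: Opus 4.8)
The plan is to follow the classical route through Dirichlet characters, the explicit formula for $\psi(x,\chi)$, and the Gronwall--Landau--Page zero-free region, with Siegel's theorem used to control a possible exceptional zero. First I would pass from the prime-counting function to the Chebyshev-type weighted sum $\psi(x;q,a) = \sum_{n \le x,\, n \equiv a \,(q)} \Lambda(n)$, since the latter is far more amenable to contour-integration methods. By orthogonality of the Dirichlet characters modulo $q$,
$$\psi(x;q,a) = \frac{1}{\phi(q)} \sum_{\chi \bmod q} \bar\chi(a)\, \psi(x,\chi), \qquad \psi(x,\chi) := \sum_{n \le x} \chi(n)\Lambda(n).$$
The principal character $\chi_0$ isolates the main term: $\psi(x,\chi_0) = \psi(x) + O((\log qx)^2)$, and the Prime Number Theorem (via the zero-free region for $\zeta$) gives $\psi(x) = x + O(x \exp(-c\sqrt{\log x}))$. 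The remaining task is to show that each non-principal $\psi(x,\chi)$ is also $O(x\exp(-c\sqrt{\log x}))$, uniformly for $q \le (\log x)^A$; dividing by $\phi(q)$ and summing over the $\phi(q)$ characters then yields $\psi(x;q,a) = x/\phi(q) + O(x\exp(-c\sqrt{\log x}))$.

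For a fixed non-principal $\chi$ modulo $q$, I would invoke the truncated explicit formula
$$\psi(x,\chi) = -\sum_{|\gamma| \le T} \frac{x^\rho}{\rho} + O\!\left(\frac{x (\log qx)^2}{T} + \log x\right),$$
where $\rho = \beta + i\gamma$ runs over the non-trivial zeros of $L(s,\chi)$. Bounding $\psi(x,\chi)$ then reduces to controlling how close these zeros can lie to the line $\operatorname{Re}(s)=1$. Here the key analytic input is the Gronwall--Landau--Page zero-free region: there is an absolute constant $c_0 > 0$ such that $L(s,\chi) \ne 0$ in the region $\sigma > 1 - c_0/\log(q(|t|+2))$, with the single possible exception of one simple real zero $\beta_1$ (the exceptional, or Siegel, zero) belonging to at most one real character modulo $q$. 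Away from this exceptional zero, the standard zero-counting estimates let one sum $x^\rho/\rho$ over $|\gamma| \le T$ and optimize $T$ (taking $T = \exp(\sqrt{\log x})$, say) to obtain the bound $O(x\exp(-c\sqrt{\log x}))$, provided $q \le (\log x)^A$ so that the factor $\log q$ is absorbed into $\log x$.

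The genuinely hard part, and the source of the ineffective implied constant, is the exceptional zero. I would dispose of it using Siegel's theorem: for every $\varepsilon > 0$ there is a constant $C(\varepsilon) > 0$ with $\beta_1 < 1 - C(\varepsilon)\, q^{-\varepsilon}$. Taking $\varepsilon$ small and using $q \le (\log x)^A$, the contribution $x^{\beta_1}/\beta_1$ of the exceptional term is $\ll x \exp(-C(\varepsilon)(\log x)^{1 - A\varepsilon})$, which is subsumed into the target error once $\varepsilon < 1/A$. Because $C(\varepsilon)$ is ineffective, the final implied constant depends ineffectively on $A$, exactly as the statement (which fixes $A$) permits. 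Finally, I would translate back from $\psi$ to $\pi(x;q,a)$ by partial summation: after discarding the prime-power terms (which contribute $O(\sqrt{x})$), the main term $x/\phi(q)$ becomes $\int_2^x dt/(\phi(q)\log t) = \operatorname{li}(x)/\phi(q)$, and the error term is preserved. The principal obstacle throughout is the exceptional zero, whose treatment forces the ineffectivity and is the only step where uniformity in $q$ via the constraint $q \le (\log x)^A$ is essential.
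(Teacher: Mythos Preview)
The paper does not prove this lemma at all: it simply recalls the Siegel--Walfisz theorem and cites \cite[Corollary 11.21]{Montgomery} for the proof. Your proposal, by contrast, supplies a correct and standard sketch of the classical argument (orthogonality of characters, the truncated explicit formula for $\psi(x,\chi)$, the Gronwall--Landau--Page zero-free region, Siegel's theorem to handle the possible exceptional zero, and partial summation back to $\pi$). This is exactly the route taken in the reference the paper cites, so while your write-up goes well beyond what the paper itself does, it is entirely consistent with the intended source. In the context of this paper the lemma is used as a black box, so a citation suffices; your sketch would be appropriate if one wanted the note to be more self-contained, at the cost of importing substantial analytic machinery that is otherwise unneeded.
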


For any prime $p$, define

\begin{equation*}
    S_p\left(x\right) := \#\{n\leq x : p\nmid \sigma\left(n\right)\}.
\end{equation*}

The main ingredient in the proof of Theorem \ref{main-thm}, which is also interesting in its own right, is an upper bound for $S_p(x)$.

\begin{lemma}\label{main-lemma}
    For any prime $p$ and $x\geq e^p$
\begin{equation*}
    S_p(x) = O\left(x \left(\frac{\log\log x}{\log x}\right)^{\frac{1}{p-1}}\right),
\end{equation*}
where the implied constant is absolute.
\end{lemma}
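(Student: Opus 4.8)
The plan is to show that if $p \nmid \sigma(n)$, then the prime factorization of $n$ is heavily constrained, and then count such $n$ using Lemma~\ref{sieve-lemma}. The key observation is multiplicativity: since $\sigma$ is multiplicative, $\sigma(n) = \prod_{q^a \| n} \sigma(q^a)$, so $p \nmid \sigma(n)$ forces $p \nmid \sigma(q^a)$ for \emph{every} prime power $q^a \| n$. For a prime $q \neq p$ with $q \| n$ exactly (i.e. $a=1$), we have $\sigma(q) = q+1$, so the condition $p \nmid \sigma(n)$ rules out every prime $q \equiv -1 \pmod p$ from dividing $n$ to the first power. More generally, $\sigma(q^a) = 1 + q + \cdots + q^a = \frac{q^{a+1}-1}{q-1}$, and $p \mid \sigma(q^a)$ whenever the multiplicative order of $q$ modulo $p$ divides $a+1$ but not $1$; in particular if $q$ has order $d \mid (a+1)$ with $d > 1$. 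So the set of "dangerous" primes $q$ — those with $q \equiv -1 \pmod p$, or more generally $\mathrm{ord}_p(q)$ small — must essentially be absent from $n$, with the only escape being that $q$ appears in $n$ to a power $a$ with $a+1$ avoiding the relevant order.

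First I would isolate the simplest mechanism: let $P = \{q \text{ prime}: q \equiv -1 \pmod p\}$. If $p \nmid \sigma(n)$, then for each $q \in P$ either $q \nmid n$ or $q^2 \mid n$ (since $q \| n$ would give $p \mid q+1 \mid \sigma(n)$). The integers divisible by $q^2$ for some large $q$ are sparse, so up to a negligible set we may assume $n$ is free of prime factors from $P$. Then Lemma~\ref{sieve-lemma} gives that the count is $O(x\, e^{-A(x)})$ where $A(x) = \sum_{q \le x,\, q \equiv -1 (p)} 1/q$. By the Siegel–Walfisz theorem (Lemma~\ref{siegel}), for $x \ge e^p$ the residue class $-1 \bmod p$ is "visible" enough that $A(x) = \frac{1}{\phi(p)} \log\log x + O(1) = \frac{1}{p-1}\log\log x + O(1)$, which yields $e^{-A(x)} \ll (\log x)^{-1/(p-1)}$. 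This is almost the claimed bound but off by the $\log\log x$ factor; that extra factor is exactly the cost of the "escape via $q^2 \mid n$" cases, which I expect must be handled by a slightly more careful argument — e.g. summing over the possible square divisors, or enlarging $P$ to also include primes $q$ with $q^2 \equiv$ something forcing $p \mid \sigma(q^2)$, i.e. primes of order $3$ mod $p$, and so on, so that $A(x)$ picks up contributions from several residue classes and the sieve bound improves enough to absorb the loss.

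The main obstacle will be making the bookkeeping of these "escape" cases precise while keeping the implied constant absolute and uniform in $p$. Concretely: a prime $q$ can divide $n$ without contributing a factor of $p$ to $\sigma(n)$ only if the exponent $a$ of $q$ in $n$ satisfies $\mathrm{ord}_p(q) \nmid (a+1)$; since the number of $q$ with small order mod $p$ is controlled (there are at most $d$ residues of order dividing $d$), one shows that requiring all of $n$'s prime factors to "cooperate" pins $n$ down to within the stated density. I would organize this as: (i) a negligible exceptional set where $n$ has a large square factor, handled by $\sum_{q > y} 1/q^2 \ll 1/(y\log y)$ with $y$ a suitable threshold depending on $p$ and $x$; (ii) the main term from Brun's sieve applied to primes $\equiv -1 \pmod p$ dividing $n$ to the first power; (iii) the Siegel–Walfisz input to evaluate $A(x)$, valid because $x \ge e^p$ ensures $p \le \log x$ so the hypothesis $q \le (\log x)^A$ is met. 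The factor $\bigl(\tfrac{\log\log x}{\log x}\bigr)^{1/(p-1)}$ should emerge naturally once the exceptional-set threshold $y$ is optimized against the sieve savings.
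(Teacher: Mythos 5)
Your proposal follows the paper's proof essentially verbatim: the paper sieves on the primes $q\equiv -1 \pmod p$ in the truncated range $\log x < q < x$ (your threshold is $y=\log x$), uses Siegel--Walfisz and partial summation to get $A(x)=\frac{1}{p-1}\left(\log_2 x-\log_3 x\right)+O(1)$, and bounds the $q^2\mid n$ escapes by $x\sum_{q>\log x}1/q^2\ll x/\log x$. The $(\log\log x)^{1/(p-1)}$ factor arises exactly as you predict, from the $\log_3 x$ lost to the truncation, and no higher-order mechanism (orders mod $p$, higher prime powers) is needed.
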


\begin{proof}
    Note that for any prime $q\equiv -1 \bmod  p$, all $n$ such that $q \|n$ satisfy $p|\sigma(n)$. Thus, to obtain an upper bound for $S_p(x)$, it suffices to estimate the number of $n\leq x$ such that either $q\nmid n $ or $q^2 \mid n$ for a subset of primes $q \equiv -1 (\bmod \, p)$. By Lemma \ref{siegel}, for $x> e^p$, we have
    \begin{equation*}
        \pi(x;p, -1) = \frac{x}{(p-1) \log x} + O\left(\frac{x}{(\log x)^2}\right),
    \end{equation*}
    where the implied constant is absolute. Now suppose $x$ is sufficiently large such that $\log x > e^{p}$. Applying partial summation, we obtain
    \begin{align*}
        \sum_{\substack{\log x\, < \,q \,<\, x \\ q\equiv -1 (\bmod\, p)}} \frac{1}{q} &= \frac{\pi(x; p, -1)}{ x}  -\frac{\pi(\log x; p, -1)}{\log x} + \int_{\log x}^{x} \frac{\pi(t; p, -1)}{t^2}\, dt\\
        & = \frac{1}{p-1} \, \int_{\log x}^x \frac{1}{t\log t}\, dt + O\left(\frac{1}{\log_2 x}\right)\\
        & = \frac{1}{p-1} \left(\log_2 x - \log_3 x\right) + O\left(\frac{1}{\log_2 x}\right).
    \end{align*}

\noindent    Now, applying Lemma \ref{sieve-lemma} with $P$ being the set of primes $q\equiv -1 \bmod  p$ and $\log x < q < x$, we obtain the number of $n\leq x$ free of prime factors from $P$ is
    $$
        O\left(x \left(\frac{\log_2 x}{\log x}\right)^{\frac{1}{p-1}}\right).
    $$
    Since
    \begin{align*}
    \#\{n\leq x : q^2 | n \text{ for prime } q\equiv -1 \bmod p \text{ and } \log x < q < x\} & \ll x \sum_{\log x < q < x } \frac{1}{q^2} 
    \ll \frac{x}{\log x},
    \end{align*}
    we have the lemma.
    
\end{proof}

\medskip

\section{\textbf{Proof of Theorems \ref{main-thm} and \ref{thm-2}}}
\medskip
Note that

\begin{equation*}               \phi\left(\sigma\left(n\right)\right)=\sigma\left(n\right)\prod_{p \mid \sigma\left(n\right)} \left(1 - \frac{1}{p}\right)
\end{equation*}
Denote by $P(y):= \prod\limits_{p\leq y} p$, the product of all primes $\leq y$. If $P(y) | \sigma(n)$, then
\begin{align*}
    \phi(\sigma(n)) &= \sigma(n) \prod_{p|\sigma(n)} \left(1-\frac{1}{p}\right)\\
    & \leq \sigma(n) \prod_{p\leq y} \left(1- \frac{1}{p}\right) < \frac{\sigma(n)}{\log y},
\end{align*}
where the last inequality follows from Merten's theorem (see \cite[Theorem 2.7 (e)]{Montgomery}), namely
\begin{equation*}
    \prod_{p\leq y} \left( 1- \frac{1}{p}\right) < \frac{1}{\log y}.
\end{equation*}
Thus, for any $c>0$, $\phi(\sigma(n))< cn$ holds if $P(y) | \sigma(n)$, $\sigma(n)< \delta n$ and $(\log y)^{-1} \leq c/\delta$.
We know that (see \cite[Theorem 3.4]{apostol})
\begin{equation*}
\sum_{n \leq x} \sigma(n) = \frac{\pi^2}{12}x^2 + O\left(x\log x\right).
\end{equation*}
Using partial summation, we get
\begin{equation*}
    \sum_{n \leq x} \frac{\sigma(n)}{n} = \frac{\pi^2}{6}x + O\left(\log^2 x \right).
\end{equation*}
Hence,
\begin{align*}
    \#\{n\leq x : \sigma\left(n\right)\geq \delta n\}& = \sum_{\substack{n \leq x \\ \sigma(n) \geq \delta n}} 1 \leq  \frac{1}{\delta} \sum_{n \leq x} \frac{\sigma(n)}{n}\\
    & = \frac{\pi^2}{6\delta}x + O\left(\frac{\log^2 x}{\delta} \right).
\end{align*}
Therefore,
\begin{equation}\label{eqn-1}
    \#\{n\leq x : \sigma\left(n\right) < \delta n\} \geq x \left( 1 - \frac{\pi^2}{6\delta}\right) + O\left(\frac{\log^2 x}{\delta}\right). 
\end{equation}
From Lemma \ref{main-lemma}, we also have
\begin{align*}
    \#\{n\leq x: P(y) \nmid \sigma(n)\} & \leq \sum_{p\leq y} |S_p(x)| \\
    & = O\left( x\left(\frac{\log_2 x}{\log x}\right)^\frac{1}{y} \frac{y}{\log y}\right).
\end{align*}
Hence,
\begin{equation}\label{eqn-2}
    \#\{n\leq x: P(y) \mid \sigma(n)\} \geq x \left(1 - O\left( \left(\frac{\log_2 x}{\log x}\right)^\frac{1}{y} \frac{y}{\log y}\right)\right).
\end{equation}
Choosing
\begin{equation*}
    y=\log_3 x \,\,\, \text{ and }\,\,\,  \delta = c \log_4 x 
\end{equation*}
in \eqref{eqn-1} and \eqref{eqn-2}, we obtain
\begin{equation*}
    \#\{n\leq x : \phi(\sigma(n)) < cn\} \geq x - \frac{\pi^2 x}{6c\log_4 x} + O\left( \frac{x\log_3 x}{\left(\log x\right)^\frac{1}{\log_3 x}\log_4 x}\right).
\end{equation*}
Hence,
\begin{equation*}
     \# \big\{ n\leq x: \phi\left(\sigma\left(n\right)\right)\geq cn \big\}\leq \frac{\pi^2 x}{6c\log_4 x} + O\left( \frac{x\log_3 x}{\left(\log x\right)^\frac{1}{\log_3 x}\log_4 x}\right),
\end{equation*}
which proves Theorem \ref{main-thm}.\\

\medskip

The proof of Theorem \ref{thm-2} follows the exact same method as above, with the choices
\begin{equation*}
    y=\log_3 x \,\,\, \text{ and }\,\,\,  \delta =  \frac{\log_4 x}{f(x)}
\end{equation*}
in \eqref{eqn-1} and \eqref{eqn-2}. This gives
\begin{equation*}
    \#\left\{n\leq x : \phi(\sigma(n)) < \frac{n}{f(n)}\right\} \geq x - O\left(\frac{xf(x)}{ \log_4 x}+\frac{x\log_3 x}{\left(\log x\right)^\frac{1}{\log_3 x}\log_4 x}\right).
\end{equation*}
This proves Theorem \ref{thm-2}.

\bigskip

\section{\bf Concluding remarks}
\medskip
The study of composition of multiplicative arithmetic functions seems to be a difficult theme in general. This has also received scant attention, except for a very few instances such as \cite{erdos} and \cite{pollack}. For example, it is not clear if $\phi(\sigma(n))$ has a normal order. It would be desirable to develop a unified theory for such functions and perhaps construct families of multiplicative functions whose compositions have a finer distribution.

\bigskip

\section{\bf Acknowledgements}
We thank Prof. Jean-Marc Deshouillers for several fruitful discussions. We also thank the referee for comments on an earlier version of this paper.

\end{document}